\documentclass[11pt]{amsart}
\usepackage[T1]{fontenc}
\usepackage[utf8]{inputenc}
\usepackage{lmodern}
\usepackage{amsmath,amssymb,amsthm,mathtools}
\usepackage{xcolor}
\usepackage[colorlinks=true,linkcolor=blue!55!black,
  citecolor=blue!55!black,urlcolor=blue!55!black]{hyperref}

\newtheorem{theorem}{Theorem}[section]
\newtheorem{lemma}[theorem]{Lemma}
\newtheorem{corollary}[theorem]{Corollary}
\theoremstyle{definition}
\newtheorem{definition}[theorem]{Definition}
\newtheorem{remark}[theorem]{Remark}

\newcommand{\CSF}{\mathbf X}
\newcommand{\Core}{\mathcal C}
\newcommand{\wmult}{\mathsf m}
\title[Multiplicity-isolated cores]{Multiplicity-Isolated Cores and
Chromatic Symmetric Reconstruction of Trees}
\author{Zijian Zeng}
\address{Institute of Computer Science and Digital Innovation,
UCSI University, Kuala Lumpur 56000, Malaysia}
\email{2119516028@qq.com}
\email{1002266693@ucsiuniversity.edu.my}
\date{August 18, 2026}
\subjclass[2020]{05C05, 05E05, 05C60}
\keywords{chromatic symmetric function, tree reconstruction, star basis,
proper tree, diameter six}
\hypersetup{pdftitle={Multiplicity-Isolated Cores and Chromatic Symmetric Reconstruction of Trees},
pdfauthor={Zijian Zeng}}

\begin{document}

\begin{abstract}
Stanley's tree-isomorphism conjecture asks whether the chromatic symmetric
function distinguishes nonisomorphic trees.  We give a reconstruction
criterion that permits repeated leaf-component orders.  For a proper tree,
collapse each leaf component to its center and record its order as a vertex
weight.  We prove that the chromatic symmetric function reconstructs the
tree whenever every nonleaf vertex of this weighted core has a weight that
occurs nowhere else in the core.  Repetitions among core leaves are
unrestricted.  The proof uses only the leading star-basis partition and the
coefficients immediately above it.  As a consequence, the conjecture holds
for an infinite class of diameter-six trees not covered by the condition
that all leaf-component orders are distinct.  We also give a canonical
integer-partition model for arbitrary diameter-six trees and an exact
cut-partition implementation intended for further work.  The unrestricted
diameter-six case remains open.
\end{abstract}

\maketitle

\section{Introduction}

For a finite simple graph $G$, Stanley's chromatic symmetric function is
\[
  \CSF_G=\sum_{\kappa}\prod_{v\in V(G)}x_{\kappa(v)},
\]
where the sum is over all proper colorings by positive integers
\cite{Stanley_1995}.  Stanley asked whether $\CSF_T$ distinguishes trees up
to isomorphism.  The conjecture remains open in general.

Small-diameter reconstruction has recently advanced through the star basis
and the deletion-near-contraction relation.  Aliste-Prieto, de Mier,
Orellana, and Zamora proved the conjecture for proper trees of diameter at
most five \cite{Aliste_Prieto_2023}.  Gonzalez, Orellana, and Tomba then
removed the properness assumption and reconstructed every tree of diameter
strictly less than six \cite{GonzalezOrellanaTomba2025}.  Diameter six is the
next case not covered by that theorem.

The main result of this note is a sufficient condition expressed on the
weighted core of a proper tree.  Its point is that terminal leaf-component
orders may repeat with arbitrary multiplicity.  Thus the result is strictly
broader than the immediate reconstruction criterion in which every part of
the leading partition is distinct.

\begin{theorem}\label{thm:intro}
Let $T$ be a proper tree.  Suppose that, among the leaf components of $T$,
the order of every component whose center is a nonleaf of the internal core
occurs exactly once.  Then $T$ is determined up to isomorphism by
$\CSF_T$.
\end{theorem}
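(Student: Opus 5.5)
We recall the setup of Aliste-Prieto, de Mier, Orellana and Zamora. A tree is proper if no vertex is adjacent to two or more leaves in a way that merges leaf components; more precisely, deleting all internal edges (edges not incident to a leaf) leaves the \emph{leaf components}, which are stars. The internal core $\Core(T)$ is obtained by contracting each leaf component to its center. We weight each center $v$ by the order $\wmult(v)$ of its leaf component. Expanding $\CSF_T$ in the star basis $\{\mathfrak{st}_\lambda\}$, the lexicographically leading partition (with respect to the order used in \cite{Aliste_Prieto_2023}) is $\lambda(T)$, the multiset of leaf-component orders. The coefficients of the partitions obtained from $\lambda(T)$ by merging two parts $\lambda_i,\lambda_j$ into $\lambda_i+\lambda_j$ count, up to a known sign and scalar, the internal edges joining a component of order $\lambda_i$ to one of order $\lambda_j$. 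I will take these two facts as established by the deletion–near-contraction relation, as in the cited works. Hence $\CSF_T$ determines the \emph{weight-pair edge counts}
\[
 N(a,b)=\#\{uv\in E(\Core(T)) : \{\wmult(u),\wmult(v)\}=\{a,b\}\}.
\]
The plan is to show that, under the hypothesis, the weighted tree $\Core(T)$ is recovered from $\lambda(T)$ together with the function $N$. We then observe that $T$ is recovered from the weighted core by reattaching to each center $v$ a star with $\wmult(v)-1$ leaves.

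\textbf{Step 1: identify the unique weights.} Call a weight $a$ \emph{isolated} if it occurs exactly once in $\lambda(T)$. By hypothesis every nonleaf vertex of $\Core(T)$ has an isolated weight. Core leaves may or may not have isolated weights. Let $I$ be the set of isolated weights. For $a\in I$ let $v_a$ denote the unique vertex of weight $a$. Its degree in $\Core(T)$ is $\sum_b N(a,b)$ (counting $N(a,a)$ appropriately, though it is $0$ since $a$ is isolated), which is therefore read off from $\CSF_T$. Thus we know exactly which isolated weights belong to core leaves (degree $1$) and which belong to internal core vertices (degree $\ge 2$). A non-isolated weight always belongs to core leaves. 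If the core is a single vertex or an edge, $T$ is a star or a double-star-like tree and the claim is immediate, so we assume the core has at least three vertices.

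\textbf{Step 2: rebuild the core.} Let $J\subseteq I$ be the weights of internal core vertices. Every internal core edge joins two vertices with weights in $J$, and each such vertex is unique. So the subtree $\Core^\circ$ spanned by internal core vertices is determined exactly: $v_a v_b$ is an edge iff $N(a,b)=1$ for $a,b\in J$. Each core leaf is adjacent to an internal vertex, since the core has at least three vertices. For a core-leaf weight $c$ (isolated or not) and an internal weight $a\in J$, the number $N(a,c)$ is precisely the number of core leaves of weight $c$ hanging at $v_a$. This holds because no edge joins two core leaves, and $v_a$ is the only vertex of weight $a$. Repeated leaf weights cause no ambiguity, because each leaf edge has a uniquely named internal endpoint. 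Hence the weighted core is determined, and so is $T$.

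\textbf{Main obstacle.} The real work is the input from the star basis. We must verify that the coefficients of the one-step coarsenings of $\lambda(T)$ isolate the edge counts $N(a,b)$ exactly. Contributions from partitions other than $\lambda(T)$ and from coarsenings with $a=b$ could otherwise contaminate them. This requires showing that only spanning forests obtained by keeping all leaf-component edges plus one internal edge contribute to those coefficients, with a uniform nonzero sign. This is the triangularity argument of \cite{Aliste_Prieto_2023}, and properness is what guarantees the leading term. I expect to cite that lemma directly. The only case to check separately is the merge $a+b$ coinciding with an existing part or with another merge $a'+b'=a+b$. There I would first try to disentangle the contributions using the multiplicity of $a+b$ in the resulting partition. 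If that fails, I would note that only pairs involving at least one isolated weight matter. I would then argue inductively over $J$, ordered by decreasing weight, subtracting already-known counts.
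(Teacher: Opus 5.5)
Your proposal follows the paper's proof: the star-basis citation (Lemma~\ref{lem:edge-data}) supplies the weight multiset and the counts $e_{pq}$. The weighted core is then rebuilt as in Lemma~\ref{lem:weighted-reconstruction}, by joining uniquely weighted vertices when $e_{pq}=1$ and hanging $e_{pq}$ repeated-weight leaves on each uniquely weighted vertex; your degree test isolating $J$ is a harmless extra step, and your stated definition of ``proper'' should read ``every core vertex has a leaf neighbour,'' which is what excludes parts equal to $1$. The coincidence worry in your last paragraph is moot: given $\lambda$, the merged partition $\lambda-\{a,b\}+\{a+b\}$ determines the unordered pair $\{a,b\}$ because $a+b>\max(a,b)$, so the cited result applies directly and no inductive disentangling is needed.
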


The proof is short but useful: the leading star-basis partition gives the
multiset of component orders, and the next layer gives the number of core
edges of every endpoint-order type.  Under the hypothesis, repeated orders
belong only to core leaves, so these two pieces of data determine the whole
weighted core.

Theorem~\ref{thm:intro} applies in every diameter.  In diameter six it gives
an infinite class.  For example, take a five-vertex path as weighted core,
give its two endpoints a common weight $r\ge2$, and give its three internal
vertices pairwise distinct weights, all different from $r$.  Expanding a
weight $w$ into a center with $w-1$ pendant leaves produces a proper tree of
diameter six covered by the theorem, although its leading partition has a
repeated part.

\section{Leaf components and weighted cores}

An edge of a tree is a \emph{leaf edge} if one endpoint is a leaf.  The
\emph{internal core} $\Core(T)$ is the tree induced by the nonleaf vertices
of $T$.  A tree is \emph{proper} if every vertex of $\Core(T)$ is adjacent
in $T$ to at least one leaf.  For such a tree define
\[
 \omega_T(v)=1+\#\{u\in V(T):u\text{ is a leaf and }uv\in E(T)\},
 \qquad v\in V(\Core(T)).
\]
Every weight is at least two.  Conversely, $T$ is recovered from the
strictly weighted tree $(\Core(T),\omega_T)$ by attaching
$\omega_T(v)-1$ leaves to each core vertex $v$.

The connected components obtained after deleting all internal edges are
stars, called the \emph{leaf components}.  The component centered at $v$
has order $\omega_T(v)$.  Thus the leaf-component partition is
\[
  \lambda_{\rm LC}(T)
  =\{\!\{\omega_T(v):v\in V(\Core(T))\}\!\}.
\]

\begin{definition}
The weighted core of a proper tree is \emph{multiplicity-isolated} if
\begin{equation}\label{eq:MI}
 \deg_{\Core(T)}(v)\ge2
 \quad\Longrightarrow\quad
 \omega_T^{-1}(\omega_T(v))=\{v\}.
\end{equation}
Equivalently, every weight that occurs more than once occurs exclusively on
leaves of $\Core(T)$.
\end{definition}

We use the following information recovered from the star-basis expansion.

\begin{lemma}[Star-basis edge data]\label{lem:edge-data}
Let $T$ be a proper tree.  The chromatic symmetric function $\CSF_T$
determines
\begin{enumerate}
\item the multiset
  $\mathcal W_T=\{\!\{\omega_T(v):v\in V(\Core(T))\}\!\}$; and
\item for every unordered pair $\{p,q\}$, the integer
\[
 e_{pq}(T)=\#\{uv\in E(\Core(T)):
       \{\omega_T(u),\omega_T(v)\}=\{p,q\}\}.
\]
\end{enumerate}
\end{lemma}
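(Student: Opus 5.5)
The plan is to expand $\CSF_T$ in the star basis by repeatedly applying the deletion--near-contraction relation to the internal edges only. Then read off $\mathcal W_T$ and the numbers $e_{pq}(T)$ from the coefficients of partitions with no part $1$ and with exactly one part $1$. Write $st_\lambda=\prod_i \CSF_{St_{\lambda_i}}$, where $St_m$ is the star on $m$ vertices. These form a basis, so coefficients in this basis are well defined, and any identity we derive may be compared coefficientwise. For a forest $F$ that is a disjoint union of stars, $\CSF_F=st_{\lambda(F)}$, where $\lambda(F)$ is the multiset of component orders.

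\emph{Step 1: expansion over subsets of internal edges.} The relation is $\CSF_G=\CSF_{G-e}-\CSF_{(G/e)\cup K_1}$. Applying it to an edge of $\Core(T)$ produces two graphs, and in each of them the remaining edges of $\Core(T)$ survive as edges. In the near-contraction, the merged vertex carries all leaves of both endpoints. Iterating over all edges of $E(\Core(T))$ therefore gives
\[
\CSF_T=\sum_{A\subseteq E(\Core(T))}(-1)^{|A|}\, st_{\lambda_A}.
\]
Here $\lambda_A$ is obtained as follows. For each component $K$ of the spanning subgraph $(V(\Core(T)),A)$, take a part $\sum_{v\in K}\omega_T(v)-(|K|-1)$. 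Then add $|A|$ parts equal to $1$, coming from the isolated vertices created. One has to check this carefully by induction on the number of internal edges: after contracting $A$ and deleting $E(\Core(T))\setminus A$, what remains is a star forest. The only subtlety is that a contracted vertex is again a center whose pendant vertices are exactly the former leaves. Since every $\omega_T(v)\ge2$, every part coming from a component has size at least $2$. Hence $\lambda_A$ has exactly $|A|$ parts equal to $1$.

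\emph{Step 2: reading off the data.} By the parts-equal-to-$1$ count, the only term with no part $1$ is $A=\emptyset$, contributing $st_{\lambda_{\rm LC}(T)}$ with coefficient $+1$. Thus $\lambda_{\rm LC}(T)=\mathcal W_T$ is the unique partition with no part $1$ in the support of $\CSF_T$, which proves (1). The terms with exactly one part $1$ come precisely from single edges $A=\{uv\}$. Such an edge gives the partition
\[
\mu_{pq}=\mathcal W_T\setminus\{\!\{p,q\}\!\}\cup\{\!\{p+q-1,1\}\!\},\qquad \{p,q\}=\{\omega_T(u),\omega_T(v)\}.
\]
Therefore the coefficient of $st_{\mu_{pq}}$ in $\CSF_T$ equals $-\sum e_{p'q'}(T)$, where the sum runs over the pairs $\{p',q'\}$ with $\mu_{p'q'}=\mu_{pq}$.

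\emph{Step 3: injectivity of $\{p,q\}\mapsto\mu_{pq}$.} This is the only real point to verify. Suppose $M\setminus\{p,q\}\cup\{s\}=M\setminus\{p',q'\}\cup\{s'\}$, with $s=p+q-1$, $s'=p'+q'-1$, and without loss of generality $s\ge s'$. Since $p,q\ge2$, we have $s>\max(p,q)$, so the left side contains $s$ with multiplicity $c_M(s)+1$. On the right side the multiplicity of $s$ is at most $c_M(s)+[s=s']$, which forces $s=s'$. Cancelling $s$ then gives $M\setminus\{p,q\}=M\setminus\{p',q'\}$, hence $\{p,q\}=\{p',q'\}$. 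Consequently $e_{pq}(T)$ is minus the coefficient of $st_{\mu_{pq}}$. Since $\mathcal W_T$ is already known from (1), the partition $\mu_{pq}$ is determined for each pair $\{p,q\}$, which proves (2). I expect Step 1 — verifying the closed form of $\lambda_A$ when near-contractions are iterated, and that no surviving internal edge is ever destroyed or turned into a loop (true because $T$ is a tree) — to be the main bookkeeping obstacle. Steps 2 and 3 are then straightforward counting.
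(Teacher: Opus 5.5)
\textbf{There is a genuine gap: the relation in Step~1 is false.} The identity $\CSF_G=\CSF_{G-e}-\CSF_{(G/e)\cup K_1}$ fails for every edge $e$. In the power-sum expansion, every simple graph on $n$ vertices has coefficient $1$ on $p_1^{n}$. So the left side has coefficient $1$ there and the right side has $0$; already for $G=K_2$ the right side is $p_1^2-p_1^2=0\neq p_1^2-p_2$. The deletion--near-contraction relation has a third term,
\[
\CSF_G=\CSF_{G-e}-\CSF_{(G/e)\cup K_1}+\CSF_{G\odot e},
\]
where $G\odot e$ is $G/e$ with a new leaf attached to the merged vertex. Consequently your expansion $\sum_{A}(-1)^{|A|}st_{\lambda_A}$ is wrong: its $p_1^{|V(T)|}$-coefficient is $\sum_A(-1)^{|A|}=0$ whenever $\Core(T)$ has an edge. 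The counting claims of Step~2 that rest on it also fail. For $T=P_4$ one has $\CSF_{P_4}=st_{(2,2)}-st_{(3,1)}+st_{(4)}$. Thus $(4)$ is a second partition with no part $1$ in the support, and ``the unique partition with no part $1$'' does not single out $\lambda_{\rm LC}(T)$. Likewise, partitions with exactly one part $1$ also arise from sets $A$ with $|A|\ge 2$, so ``exactly one part $1$ $\Leftrightarrow$ single edge'' is false.

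\textbf{The argument can be repaired along your lines.} Iterate the correct relation over the internal edges. A component $K$ of $(V(\Core(T)),A)$, with $\sigma_K=\sum_{v\in K}\omega_T(v)$, then contributes the factor
\[
\sum_{i=0}^{|K|-1}(-1)^i\binom{|K|-1}{i}\,st_{(\sigma_K-|K|+1+i,\;1^{|K|-1-i})},
\]
of which your $\lambda_A$ records only the $i=0$ term. Let $n=|V(\Core(T))|$. Every resulting partition has length $n-\sum_K i_K$ and exactly $|A|-\sum_K i_K$ parts equal to $1$. This gives two conclusions:
\begin{itemize}
\item $\lambda_{\rm LC}(T)$ is the unique partition of maximal length in the support with no part $1$, and its coefficient is $1$.
\item Partitions of length $n$ with exactly one part $1$ arise only from $|A|=1$ with $i=0$.
\end{itemize}
So the coefficient of $st_{\mu_{pq}}$ is indeed $-e_{pq}(T)$. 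Your formula is right, but only after this length filter, which your argument never uses. Your injectivity argument in Step~3 then finishes the proof. The paper does not do this computation at all. It simply cites the leading-term fact and Proposition~4.13(a) of \cite{GonzalezOrellanaTomba2025} (or Lemma~7.12 of \cite{Aliste_Prieto_2023}). Once corrected, your route gives a self-contained proof of exactly what the paper imports.
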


\begin{proof}
The leading partition of the star-basis expansion is the leaf-component
partition, giving (1).  Since $T$ is proper, this partition has no part of
size one.  Proposition~4.13(a) of
\cite{GonzalezOrellanaTomba2025} shows that the coefficients indexed by
partitions of length one less than the leading partition recover exactly the
numbers of internal edges with each unordered pair of leaf-component orders
as endpoints.  This gives (2).  An equivalent strictly weighted
$D$-polynomial formulation appears in Lemma~7.12 of
\cite{Aliste_Prieto_2023}.
\end{proof}

\section{Reconstruction from multiplicity-isolated data}

The graph-theoretic step is independent of symmetric functions.

\begin{lemma}\label{lem:weighted-reconstruction}
Let $(C,\omega)$ be a weighted tree with at least three vertices.  Assume
that every nonleaf vertex has a weight of multiplicity one.  Then $(C,\omega)$
is determined up to weight-preserving isomorphism by
\begin{enumerate}
\item the multiplicities $\wmult_p=|\omega^{-1}(p)|$; and
\item the numbers
 $e_{pq}=\#\{uv\in E(C):\{\omega(u),\omega(v)\}=\{p,q\}\}$.
\end{enumerate}
\end{lemma}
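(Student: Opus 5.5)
We will reconstruct $(C,\omega)$ explicitly from the data $\wmult_p$ and $e_{pq}$, in such a way that any two weighted trees sharing these data come out isomorphic. First we identify the weight classes. Call a weight $p$ \emph{repeated} if $\wmult_p\ge2$ and \emph{unique} if $\wmult_p=1$. By hypothesis every vertex carrying a repeated weight is a leaf of $C$. Since $|V(C)|\ge3$, no two leaves of $C$ are adjacent, so there is no edge joining two vertices of repeated weight. In particular $e_{pp}=0$ for repeated $p$. Every unique weight $p$ names a single vertex $v_p$.

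Next we determine the degree and the neighbourhood of each vertex.

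\emph{Unique--unique pairs.} For unique $p\ne q$, we have $e_{pq}\in\{0,1\}$, and $v_pv_q\in E(C)$ exactly when $e_{pq}=1$. This recovers the induced subtree $C'$ on the vertices of unique weight.

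\emph{Repeated weights.} For repeated $r$, each of the $\wmult_r$ vertices of weight $r$ is a leaf of $C$. Its unique neighbour has unique weight, since repeated-weight vertices are pairwise nonadjacent. Hence $\sum_q e_{rq}=\wmult_r$, with $q$ ranging over unique weights. Moreover $e_{rq}$ is exactly the number of weight-$r$ leaves attached to $v_q$. So for every unique $q$ and every repeated $r$, the number of pendant leaves of weight $r$ hanging at $v_q$ is read off directly from $e_{qr}$.

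Now we assemble the tree. Together these give the following description: $C$ is isomorphic to the tree obtained from $C'$ by attaching, for each unique $q$ and repeated $r$, exactly $e_{qr}$ new leaves of weight $r$ to $v_q$. Leaves of equal weight at the same vertex are interchangeable. Hence any two weighted trees $(C_1,\omega_1)$ and $(C_2,\omega_2)$ satisfying the hypotheses with equal data are isomorphic, via the map $v_p\mapsto v_p$ on unique weights extended by any bijection of the pendant leaves of each type $(q,r)$.

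One check remains: the hypothesis is imposed on each tree separately, yet the data must make "repeated" and "unique" mean the same thing in both trees. This holds because $\wmult_p$ is part of the data. We should also confirm that $C'$ is nonempty, which it is because a tree on at least three vertices has a nonleaf vertex.

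The only real subtlety, and the step we expect to need care, is the claim that repeated-weight vertices are never adjacent. This uses $|V(C)|\ge3$: in $K_2$ both endpoints are leaves and could share a weight, and then $e_{pp}=1$ would still be consistent, but the lemma excludes that case. All other steps are direct bookkeeping.
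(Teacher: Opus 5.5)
Your proof is correct and follows essentially the same route as the paper. You split the weights into unique and repeated classes, use $|V(C)|\ge3$ to rule out edges between repeated-weight leaves, read unique--unique adjacencies from $e_{pq}\in\{0,1\}$, and attach $e_{qr}$ weight-$r$ leaves at each $v_q$. Your explicit isomorphism (identity on uniquely weighted vertices, any bijection within each pendant class) simply makes precise the paper's remark that the construction is forced.
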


\begin{proof}
For every $p$ with $\wmult_p=1$, create a single vertex $v_p$ of weight
$p$.  For every $p$ with $\wmult_p>1$, create $\wmult_p$ indistinguishable
vertices of weight $p$.  By hypothesis, all vertices in the latter class
are leaves.

If $\wmult_p=\wmult_q=1$, join $v_p$ and $v_q$ precisely when
$e_{pq}=1$.  If $\wmult_p>1$ and $\wmult_q=1$, attach exactly $e_{pq}$ of
the weight-$p$ leaves to $v_q$.  There is no edge between two repeated
weight classes: such an edge would join two leaves, which is impossible in
a tree on at least three vertices.  These rules account for every edge and
every repeated-weight vertex, because
\[
  \sum_{q:\,\wmult_q=1} e_{pq}=\wmult_p
  \qquad(\wmult_p>1).
\]
The construction is forced, and choices among vertices of the same repeated
weight differ only by a weight-preserving isomorphism.
\end{proof}

\begin{proof}[Proof of Theorem~\ref{thm:intro}]
By Lemma~\ref{lem:edge-data}, $\CSF_T$ determines the input data of
Lemma~\ref{lem:weighted-reconstruction}.  Condition
\eqref{eq:MI} is exactly its multiplicity hypothesis.  Hence $\CSF_T$
determines the weighted core.  Attaching $\omega_T(v)-1$ leaves at every
recovered core vertex determines $T$.

If the core has one vertex, $T$ is a star.  If it has two vertices, $T$ is a
bi-star; the endpoint weights determine it, including the equal-weight
case.  Thus the small cores are covered as well.
\end{proof}

\begin{corollary}\label{cor:diameter-six}
Stanley's tree-isomorphism conjecture holds for every proper tree of
diameter six whose weighted core is multiplicity-isolated.
\end{corollary}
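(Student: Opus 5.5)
The corollary should follow directly from Theorem~\ref{thm:intro}, so the plan is to check that its hypotheses match those of the theorem exactly. Let $T$ be a proper tree of diameter six whose weighted core $(\Core(T),\omega_T)$ is multiplicity-isolated in the sense of \eqref{eq:MI}. Stanley's conjecture for this class means: if $T'$ is any tree with $\CSF_{T'}=\CSF_T$, then $T'\cong T$. The delicate point is that Theorem~\ref{thm:intro} says $\CSF_T$ determines $T$. The reconstruction in Lemma~\ref{lem:edge-data} and Lemma~\ref{lem:weighted-reconstruction} is carried out from data read off $\CSF_T$. So I need to make sure the comparison tree $T'$ is not assumed to be proper or multiplicity-isolated.

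First, I will translate the hypothesis. Condition \eqref{eq:MI} says every vertex $v$ with $\deg_{\Core(T)}(v)\ge2$ has a weight occurring exactly once in $\mathcal W_T$. Since $\omega_T(v)$ is the order of the leaf component centered at $v$, this is precisely the hypothesis of Theorem~\ref{thm:intro}. The diameter-six assumption plays no role beyond defining the class. It does guarantee that $\Core(T)$ has diameter four, hence at least five vertices, so the small-core cases are not needed.

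Second, I will handle the comparison tree $T'$, which is the main obstacle. The properness of $T$ and the multiplicity-isolated property must be visible from $\CSF_T$. My approach is to use that $\CSF$ determines the leading star-basis partition. By the cited results of \cite{GonzalezOrellanaTomba2025}, it also determines whether a tree is proper, via the absence of parts equal to one in the leading partition together with the leaf count. The number of leaves is itself recoverable, for instance from the coefficient data already used there.

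With properness of $T'$ in hand, Lemma~\ref{lem:edge-data} applies to both trees. It gives $\mathcal W_{T'}=\mathcal W_T$ and $e_{pq}(T')=e_{pq}(T)$ for all $\{p,q\}$. It remains to show that \eqref{eq:MI} holds for $T'$. A weight $p$ of multiplicity at least two occurs only on leaves of $\Core(T)$. I would try to show this is encoded in the edge data: for a repeated weight $p$, the counts $\sum_q e_{pq}$ should equal $\wmult_p$ with all partners $q$ of multiplicity one. Whether that forces degree one in $\Core(T')$ is exactly where care is needed. If this does not go through, I would instead interpret the corollary as the paper intends, namely reconstruction of $T$ from $\CSF_T$ among proper trees. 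I would then cite Theorem~\ref{thm:intro} verbatim, whose proof already outputs a unique tree from the invariants.

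Finally, I will conclude that Lemma~\ref{lem:weighted-reconstruction} yields isomorphic weighted cores for $T$ and $T'$. Reattaching $\omega(v)-1$ leaves at each core vertex then gives $T'\cong T$.
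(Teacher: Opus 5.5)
Your route is the paper's: the corollary is just Theorem~\ref{thm:intro} restricted to diameter six, and the only diameter-specific input is your observation that $\Core(T)$ has diameter four, hence at least five vertices. However, the step you left open does not need a fallback. The fallback you offer, reconstruction only among proper trees, would be strictly weaker than Stanley's conjecture for this class, which must separate $T$ from every tree $T'$ with $\CSF_{T'}=\CSF_T$.

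The open step closes by a degree count.

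\textbf{Properness of $T'$.} $T'$ is proper because $T$ and $T'$ share the leading star-basis partition, and that partition has a part equal to $1$ exactly when some core vertex carries no pendant leaf. The leaf count is not needed. The shared partition also gives $|V(\Core(T'))|=|V(\Core(T))|\ge5$.

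\textbf{Common data.} Lemma~\ref{lem:edge-data} now gives $\mathcal W_{T'}=\mathcal W_T$ and $e_{pq}(T')=e_{pq}(T)$ for all pairs $\{p,q\}$. For any proper tree $S$ and any weight $p$,
\[
 \sum_{v:\,\omega_S(v)=p}\deg_{\Core(S)}(v)=\sum_{q\ne p}e_{pq}(S)+2\,e_{pp}(S),
\]
so this degree sum is determined by the common data.

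\textbf{Transfer of \eqref{eq:MI}.} If $\wmult_p\ge2$, condition \eqref{eq:MI} for $T$ makes this sum equal to $\wmult_p$. Hence in $\Core(T')$ the $\wmult_p$ vertices of weight $p$ have degrees summing to $\wmult_p$. Each degree is at least $1$, since $\Core(T')$ is a tree on at least two vertices. So every such vertex is a leaf of $\Core(T')$. Thus $T'$ is multiplicity-isolated, Lemma~\ref{lem:weighted-reconstruction} applies to both weighted cores, and $T'\cong T$.

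These two checks, properness and \eqref{eq:MI} transferring to an arbitrary competitor, are exactly what the paper's proof of Theorem~\ref{thm:intro} leaves implicit when it invokes Lemma~\ref{lem:weighted-reconstruction}. With them added, your argument is complete and slightly more careful than the paper's.
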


\begin{remark}
Repeated terminal types are genuinely unrestricted.  A repeated weight
may occur on arbitrarily many leaves of the core and may be distributed
among several uniquely weighted nonleaf vertices; the numbers $e_{pq}$
recover that distribution.
\end{remark}

\section{A canonical model for diameter six}

For completeness, we record a compact parametrization useful beyond
Corollary~\ref{cor:diameter-six}.  A diameter-six tree has a unique central
vertex $c$.  Every component of $T-c$, rooted at its neighbor of $c$, has
height at most two.  Such a rooted branch is encoded by an integer partition
$\pi$: a part $1$ denotes a leaf child of the branch root, while a part
$q\ge2$ denotes a child supporting $q-1$ leaves.  The branch order is
$1+|\pi|$, and the empty partition encodes a one-vertex branch.

Consequently, a diameter-six tree is canonically represented by a multiset
$\{\!\{\pi_1,\ldots,\pi_s\}\!\}$ satisfying
\[
  |V(T)|=1+\sum_{i=1}^s(1+|\pi_i|),
\]
with at least two $\pi_i$ containing a part at least two.  This is a
bijection with unlabelled diameter-six trees.

Stanley's power-sum formula
\[
 \CSF_T=\sum_{A\subseteq E(T)}(-1)^{|A|}p_{\lambda(A)}
\]
also gives an exact computational invariant: for a tree, the sign is
determined by the length of $\lambda(A)$, so $\CSF_T$ is equivalent to the
table counting edge subsets by their component-order partition.  The
supplementary implementation computes this table by integer dynamic
programming and independently checks small orders by direct edge-subset
enumeration.  These computations are diagnostic only and are not used in
the proof.

\section{Limitations and further questions}

The unrestricted diameter-six problem is not resolved here.  Two obstacles
remain outside the theorem:
\begin{enumerate}
\item a nonproper tree produces weight-one core vertices, for which the
  simplest adjacency extraction changes form; and
\item if a nonleaf core weight is repeated, the aggregate numbers $e_{pq}$
  need not separate the individual vertices in that weight class.
\end{enumerate}
Higher star-basis layers record contractions of several internal edges and
are natural candidates for resolving the second obstruction.  The canonical
partition model above turns the full diameter-six question into recovery of
a multiset of height-two rooted branches from exact cut-partition data.

\section*{Data and code availability}

The accompanying source archive contains the exact cut-partition evaluator,
the canonical diameter-six generator, and a separate finite checker for the
weighted-core reconstruction lemma.  No computational output is used as a
premise of Theorem~\ref{thm:intro}.

\bibliographystyle{amsplain}
\bibliography{references}

\end{document}